\date{}
\theoremstyle{definition}
\newtheorem{theorem}{Theorem}
\newtheorem{corollary}[theorem]{Corollary}
\newtheorem{claim}[theorem]{Claim}
\title{To cover a permutohedron}
\author{Bochao Kong\thanks{Michigan State University, East Lansing, MI, USA. Email: {\tt kongboc1@msu.edu}.} \and Ji Zeng\thanks{Alfréd Rényi Institute of Mathematics, Budapest, Hungary. Supported by ERC Advanced Grants ``GeoScape'', no. 882971 and ``ERMiD'', no. 101054936. Email: {\tt zeng.ji@renyi.hu}.}}
\begin{document}

\maketitle

\begin{abstract}
  The permutohedron $P_n$ of order $n$ is a polytope embedded in $\mathbb{R}^n$ whose vertex coordinates are permutations of the first $n$ natural numbers. It is obvious that $P_n$ lies on the hyperplane $H_n$ consisting of points whose coordinates sum up to $n(n+1)/2$. We prove that if the vertices of $P_n$ are contained in the union of $m$ affine hyperplanes different from $H_n$, then $m\geq n$ when $n \geq 3$ is odd, and $m \geq n-1$ when $n \geq 4$ is even. This result has been established by Pawlowski in a more general form. Our proof is shorter, rather different, and gives an algebraic criterion for a non-standard permutohedron generated by $n$ distinct real numbers to require at least $n$ non-trivial hyperplanes to cover its vertices.
\end{abstract}

% 52B05

Let $A$ be a set of $n$ distinct real numbers. We use $P_A$ to denote the polytope in $\mathbb{R}^n$ whose vertex coordinates are permutations of $A$. It is easy to argue that all these points, whose coordinates are permutations of $A$, are in convex position. It is also obvious that $P_A$ is contained in the hyperplane $H_A$ defined by the equation $\sum_{j\in [n]} x_j = \sum_{a\in A} a$. Here, $[n] := \{1,2,\dots,n\}$ and $x_j$ is the $j$-th coordinate. In fact, $P_A$ is always $(n-1)$-dimensional, see, e.g. \cite{ziegler1995lectures}. The special case $P_{[n]}$ is known as the \textit{permutohedron} of order $n$. We write $P_n := P_{[n]}$ and $H_n := H_{[n]}$ for simplicity.

A collection $\mathcal{C}$ of affine hyperplanes is called a \textit{vertex cover} of $P_A$ if $H_A \not\in \mathcal{C}$ and every vertex of $P_A$ lies on some hyperplane in $\mathcal{C}$. It is obvious that $P_A$ can always be covered by $n$ hyperplanes defined by equations $x_1 = a$ for $a \in A$. However, when $n$ is even, there are $n-1$ hyperplanes, defined by $x_1+x_j = n+1$ for $j\in [n]\setminus 1$, that contain all vertices of $P_n$. Recently, Hegedüs and Károlyi~\cite{hegedus2024covering} conjectured the following statement, which is our main result.

\begin{theorem}\label{thm:main}
  If $n\ge 3$ is odd, then every affine hyperplane $H\subset\mathbb{R}^n$ with $H\neq H_n$ contains at most $(n-1)!$ vertices of $P_n$.
\end{theorem}

\begin{corollary}\label{main}
  If $n \geq 3$ is odd, a vertex cover of $P_n$ must have size at least $n$. If $n \geq 4$ is even, a vertex cover of $P_n$ must have size at least $n-1$.
\end{corollary}

\begin{proof}[Proof of Corollary~\ref{main}]
  The statement for $n\ge 3$ odd is an immediate consequence of Theorem~\ref{thm:main} by counting. When $n\ge 4$ is even, the bound $n-1$ follows from the odd-dimensional case and the reduction argument in \cite{hegedus2024covering} (paragraph after Conjecture~6).
\end{proof}

After circulating an earlier draft, we learned that Pawlowski~\cite{pawlowski2024} recently proved a general result akin to Theorem~\ref{thm:main} with $P_n$ replaced by $P_A$, and it answers an earlier conjecture by Huang, McKinnon, and Satriano~\cite{huang2021fraction}. The proof in \cite{pawlowski2024} relied on the Bruhat order and the Sperner property. The authors in~\cite{huang2021fraction} proved their conjecture in some special cases by an analysis via algebraic geometry albeit not concluding Theorem~\ref{thm:main}. We shall analyze the same variety as in \cite{huang2021fraction} rather differently and obtain a shorter proof of Theorem~\ref{thm:main}. Our proof gives an algebraic criterion on $A$ ensuring that any non-trivial hyperplane contains at most $(n-1)!$ vertices of $P_A$. We write the elementary symmetric polynomial on $n$ variables of degree $d$ as
\begin{equation*}
  S_d(\textbf{x}) = S_d(x_1,\dots,x_n) = \sum_{1\leq j_1<j_2<\dots<j_d \leq n} x_{j_1} x_{j_2}\cdots x_{j_d}.
\end{equation*} By abuse of notation, we let $S_d(A)$ be the value of $S_d$ at a point whose coordinate is any permutation of $A$. We consider the following polynomial in one complex variable
\begin{equation*}
  F_A(z) = z^n - S_1(A) \cdot z^{n-1} + S_2(A) \cdot z^{n-2} - \cdots + (-1)^{n-1} S_{n-1}(A) \cdot z.
\end{equation*} A \textit{critical point} of $F_A$ refers to a number $p$ such that $F'_A(p) = 0$. We define a \textit{critical value} of $F_A$ to be a number $v$ such that $F_A(p) + (-1)^n v = 0$ for some critical point $p$. By elementary algebra, $v$ is a critical value if and only if the equation $F_A(z) + (-1)^n v = 0$ has a multiple root. By elementary calculus, $F_A$ actually has $n-1$ distinct real critical points interlaced between the consecutive elements in $A$. Hence, all critical values of $F_A$ are real numbers, though not necessarily distinct. The importance of complex numbers will be evident in the proof of our criterion.

\begin{theorem}\label{general}
  Assume that $F_A$ has $n-1$ distinct critical values. Then every affine hyperplane $H\subset \mathbb{R}^n$ with $H\neq H_A$ contains at most $(n-1)!$ vertices of $P_A$. In particular, any vertex cover of $P_A$ has size at least $n$.
\end{theorem}

\noindent As a consequence, $P_A$ generated by a generic $A$ would require at least $n$ elements in its vertex cover, see Proposition~1.6 in \cite{huang2021fraction} for a different proof of this fact. As another consequence, when $A = \{a_1<a_2<a_3<a_4\}$, we can easily argue that the existence of a size-three vertex cover of $P_A$ implies $a_1+a_4 = a_2+a_3$ by applying Theorem~\ref{general}. We first deduce Theorem~\ref{thm:main}.

\begin{proof}[Proof of Theorem~\ref{thm:main}]

  Let $n \geq 3$ be odd and write $F_{n} = F_{[n]}$. According to Theorem~\ref{general}, it suffices to verify that $F_n$ has $n-1$ distinct critical values. We consider the real polynomial $G_n(x) = \prod_{j\in [n]} (x-j)$ and notice that $G_n(x) = F_n(x) - n!$. Hence, the critical points of $G_n$ coincide with those of $F_n$, and it suffices to prove $G_n$ has $n-1$ distinct critical values.

  Now, let $v_j$ be the extreme value of $G_n$ on the interval $(j,j+1)$ for $j\in [n-1]$. Notice that $v_1,\dots,v_{n-1}$ are the critical values of $G_n$. Since $n$ is odd, the graph of the function $G_n$ is symmetric with respect to the point $\frac{n+1}{2}$ on the $x$-axis. Therefore, it suffices to prove $|v_{j}| < |v_{j+1}|$ for every integer $\frac{n}{2} < j < n$. Let $\delta$ be defined by $v_j = G_n(j + 1 - \delta)$, we can compute
  \begin{align*}
    \frac{|G_n(j+1 - \delta)|}{|G_n(j+1 + \delta)|} &= \frac{(j-\delta)(j-1-\delta)\cdots(1-\delta)\cdot \delta(1+\delta)\cdots(n-j-1+\delta)}{(j+\delta)(j-1+\delta)\cdots \delta \cdot (1-\delta)(2-\delta)\cdots(n-j-1-\delta)}\\
    &=\frac{(n-j-\delta)(n-j+1-\delta)\cdots(j-\delta)}{(n-j+\delta)(n-j+1+\delta)\cdots(j+\delta)} < 1.
  \end{align*} This implies $|v_{j}| = |G_n(j+1 - \delta)| < |G_n(j+1 + \delta)| \leq |v_{j+1}|$ as wanted.
\end{proof}

Our proof of Theorem~\ref{general} requires surface-level knowledge of complex algebraic geometry. We refer the reader to the first section of \cite{griffiths1978principles} and the first chapter of \cite{forster1981lectures} for an introduction. Given a covering map $f: X \to Y$ and a base point $y \in Y$, the \textit{monodromy action} of the fundamental group $\pi_1(Y,y)$ on the fiber $f^{-1}(y)$ is as follows: we can always lift a loop representing $\ell \in \pi_1(Y,y)$ starting at a point $x \in f^{-1}(y)$; the image of $x$ under the action of $\ell$ is the ending point of this lifting, and the monodromy theorem guarantees this ending point is well-defined. To avoid wordiness in our proof, ``near'' a point means ``on some neighborhood of'' that point.

\begin{proof}[Proof of Theorem~\ref{general}]

  Let $V \subset \mathbb{C}^n$ be the algebraic variety (not necessarily irreducible) defined as the set of common zeroes of the polynomials $S_d(\textbf{x}) - S_d(A)$ for $d \in [n-1]$. Note that all vertices of $P_A$ are on $V$. It suffices to prove that $V$ is irreducible and one-dimensional (over $\mathbb{C}$). If this is the case, the degree of the curve $V$ will be at most the product of the degrees of the defining polynomials, which is $(n-1)!$. A hyperplane $H$ in $\mathbb{R}^n$ can also be regarded as a hyperplane in $\mathbb{C}^n$. Given that $V$ is an irreducible curve, $V \cap H$ either equals $V$ or has dimension zero. In the former case, all vertices of $P_A$ are contained in $H$, which implies $H = H_A$. In the latter case, we have $|V \cap H| \leq \deg(V)\leq (n-1)!$ by B\'ezout's theorem.

  We consider the holomorphic function $f: V \to \mathbb{C}$ such that $f(x_1,x_2,\dots,x_n) = x_1x_2\dots x_n$. Let $R_y$ be the collection of $n$ roots (possibly repeated) of the equation $F_A(z) + (-1)^{n} y = 0$ for fixed $y \in \mathbb{C}$. Importantly, the preimage $f^{-1}(y)$ consists of all permutations of $R_y$. In particular, $|f^{-1}(y)| < \infty$ for all $y \in \mathbb{C}$. This means any irreducible component of $V$ has dimension at most one. Because the roots of a polynomial vary continuously as a function of the coefficients (see e.g. \cite{harris1987shorter}), $V$ does not have isolated points, so $V$ is purely one-dimensional. It suffices to prove $V$ is irreducible.

  Now, we write $\Omega = \mathbb{C} \setminus \{v_1,v_2,\dots,v_{n-1}\}$ with $v_1,\dots,v_{n-1}$ being the critical values of $F_A$. Let $U = f^{-1}(\Omega)$. We can regard $f: U \to \Omega$ as a covering map between Riemann surfaces (not necessarily connected) as follows: for a point $\textbf{x} \in U$, there are holomorphic functions $x_1,\dots,x_n$ defined near $f(\textbf{x})$ with $(x_1(f(\textbf{x})),\dots, x_n(f(\textbf{x}))) = \textbf{x}$; moreover, $F_A(x_j(y)) + (-1)^n y = 0$ for all $y$ near $f(\textbf{x})$, see e.g. Corollary~8.8 in~\cite{forster1981lectures}; then the holomorphic mapping $(x_1(y),\dots,x_n(y))$ near $f(\textbf{x})$ is a parametrization of $U$ near $\textbf{x}$. Since $V$ is purely one-dimensional and $U$ differs from $V$ by finitely many points, $U$ being irreducible implies $V$ being irreducible. Since $U$ is smooth by its parametrization, its irreducibility is implied by its connectedness. As $\Omega$ is path-connected, we only need to show points in $f^{-1}(y)$ are connected by paths in $U$ for some $y \in \Omega$.

  It suffices to prove the monodromy action of $\pi_1(\Omega,y)$ on the fiber $f^{-1}(y)$ is transitive for some $y\in \Omega$. Note that a permutation of $R_y$ naturally acts on $f^{-1}(y)$ by permuting the coordinates. We have the following two claims.

  \begin{claim}\label{transposition}
    For $y \in \Omega$ and $j \in [n-1]$, let $\ell_j \in \pi_1(\Omega,y)$ be represented by a loop at $y$ whose winding number equals 1 around $v_j$, and equals 0 around $v_k$ for $k\neq j$. Then the monodromy action of $\ell_j$ on $f^{-1}(y)$ is the same as a transposition in the permutation group of $R_y$.
  \end{claim}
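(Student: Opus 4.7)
My plan is to analyze the covering $f\colon U\to\Omega$ locally near the critical value $v_j$ and show that the loop $\ell_j$ induces a transposition because it encircles a single simple square-root branch point of the equation $F_A(z)+(-1)^n y=0$. First, I would use the hypothesis that $F_A$ has $n-1$ distinct critical values together with $\deg F_A'=n-1$ to deduce that each $v_j$ comes from a unique critical point $p_j$ and that $F_A''(p_j)\neq 0$; equivalently, $F_A(z)+(-1)^n v_j$ has a double root at $p_j$ and $n-2$ simple roots $q_1,\dots,q_{n-2}$, none of which coincides with any $p_k$ for $k\neq j$ since $F_A(p_k)+(-1)^n v_j=(-1)^n(v_j-v_k)\neq 0$.

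Next, I would carry out the standard local analysis. Expanding $F_A(z)+(-1)^n y=\tfrac{1}{2}F_A''(p_j)(z-p_j)^2+(-1)^n(y-v_j)+O((z-p_j)^3)$ near $(p_j,v_j)$ and applying the implicit function theorem (or just quoting the square-root description of a simple branch point), one sees that for $y$ in a small punctured disk $D^{*}\subset\Omega$ around $v_j$ the equation has exactly two roots $r^{\pm}(y)$ near $p_j$, satisfying $(z-p_j)^2\approx -2(-1)^n(y-v_j)/F_A''(p_j)$, and that traversing a small positively oriented loop in $D^{*}$ around $v_j$ exchanges $r^{+}$ and $r^{-}$. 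The remaining roots $q_i(y)$ stay away from every critical point in a neighborhood of $v_j$, so they extend holomorphically throughout that neighborhood and return to themselves along this small loop.

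Finally, I would translate this into the statement about $f^{-1}(y)$. Up to homotopy in $\Omega$, I may replace $\ell_j$ by a concatenation of a path from $y$ to some $y_0\in D^{*}$, a small loop around $v_j$ inside $D^{*}$, and the reverse path; homotopy invariance of monodromy reduces the computation to the small local loop. Using the coordinatewise parametrization of $U$ described in the paragraph preceding the claim, lifting this loop amounts to analytically continuing each coordinate $x_i$ of a tuple $\mathbf{x}\in f^{-1}(y)$ as a root of $F_A(z)+(-1)^n y(t)=0$. By the local picture, the two coordinates whose values are $r^{+}(y)$ and $r^{-}(y)$ get swapped while all others return unchanged, which is precisely the action of the transposition $(r^{+}(y),r^{-}(y))\in S(R_y)$ on $\mathbf{x}$. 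The most delicate step is the last one, namely passing rigorously from the local square-root branching to the global monodromy on tuples via homotopy invariance and the coordinatewise parametrization; once the local picture is pinned down, however, this is essentially book-keeping.
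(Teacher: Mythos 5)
Your proposal is correct and follows essentially the same route as the paper: use the $n-1$ distinct critical values to isolate a unique simple critical point $p_j$ over $v_j$, show that the two roots near $p_j$ form a square-root branch which a small loop around $v_j$ exchanges while the remaining simple roots continue holomorphically and return to themselves, and then transport this local picture back to the base point $y$. The only cosmetic difference is that the paper makes the local square-root structure precise via the Weierstrass preparation theorem (writing $F_A(z)+(-1)^n y=((z-p_j)^2-\beta(y))\gamma(z,y)$), whereas you invoke the Taylor expansion and the standard description of a simple branch point directly.
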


  \begin{claim}\label{cycle}
    For a complex number $y$ with sufficiently large $|y|$, let $\ell_n \in \pi_1(\Omega,y)$ be represented by a loop at $y$ whose trajectory is a circle centered at origin. Then the monodromy action of $\ell_n$ on $f^{-1}(y)$ is the same as a cycle of length $n$ in the permutation group of $R_y$.
  \end{claim}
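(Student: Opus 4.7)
The strategy is to analyze the roots of $F_A(z)+(-1)^n y=0$ asymptotically for $|y|=R$ large. Writing $F_A(z)=z^n+O(z^{n-1})$, the dominant balance is $z^n\sim(-1)^{n+1}y$, so the $n$ elements of $R_y$ are approximately $\zeta^k w$ for $k=0,1,\dots,n-1$, where $\zeta=e^{2\pi i/n}$ is a primitive $n$-th root of unity and $w$ is a chosen $n$-th root of $(-1)^{n+1}y$. Taking $R>\max_j|v_j|$ guarantees that $\ell_n$ lies entirely in $\Omega$, so the $n$ roots remain simple and vary holomorphically as $y$ traverses the loop.

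First I would parametrize $y=Re^{i\theta}$ with $\theta\in[0,2\pi]$, and let $w(\theta)$ be the continuous branch of $[(-1)^{n+1}Re^{i\theta}]^{1/n}$ starting from a fixed value $w(0)$. A Rouché estimate shows that, for $R$ sufficiently large, the equation $F_A(z)+(-1)^n y=0$ has exactly one root in each of $n$ pairwise disjoint disks centered at $\zeta^k w(\theta)$, uniformly in $\theta$. Call this root $z_k(\theta)$; then it is singled out by analytic continuation along $\theta$. Since $w(2\pi)=\zeta w(0)$, one obtains $z_k(2\pi)=z_{k+1}(0)$, so the monodromy permutation of $R_y$ is the $n$-cycle $\sigma$ sending $z_k(0)\mapsto z_{k+1}(0)$.

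Next I would translate this into the claimed action on $f^{-1}(y)$. A point of the fiber is an ordered tuple $(x_1,\dots,x_n)$ enumerating $R_y$. Lifting $\ell_n$ starting at such a tuple, each coordinate $x_j$ follows a continuous path in $\mathbb{C}$ that stays a root of the equation throughout, and by the previous paragraph it ends at $\sigma(x_j)$. Therefore the monodromy sends $(x_1,\dots,x_n)\mapsto(\sigma(x_1),\dots,\sigma(x_n))$, which is exactly the action of the $n$-cycle $\sigma$ in the permutation group of $R_y$, as claimed.

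The main technical step is the uniform Rouché estimate in the second paragraph. Conceptually, it just reflects the fact that the ramified cover $F_A:\mathbb{CP}^1\to\mathbb{CP}^1$ is totally ramified over $\infty$ (since $F_A$ is a polynomial of degree $n$), so the local monodromy around $\infty$---equivalently, the monodromy of a sufficiently large loop $\ell_n$ around the origin---is automatically a single $n$-cycle.
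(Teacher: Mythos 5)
Your argument is correct, and it reaches the same conceptual endpoint as the paper --- that the degree-$n$ polynomial $F_A$ is totally ramified over $\infty$, so a large loop around the origin induces an $n$-cycle --- but by a genuinely different technical route. The paper works in coordinates at infinity: setting $s=1/z$, $h=1/y$, it removes the singularity of $\epsilon(s)=(-1)^{n-1}/\bigl(s^nF_A(1/s)\bigr)$ at $s=0$, extracts a holomorphic $n$-th root $\zeta$, and shows that the defining equation becomes the exact local normal form $(\eta(s))^n=h$ with $\eta(s)=s\zeta(s)$ a local biholomorphism; the branches $s_j=\eta^{-1}(\iota^j\theta(h))$ are then cyclically permuted by a small loop around $h=0$, with no estimates required and with the same Riemann-surface toolkit (holomorphic roots, local inversion) already used for Claim~\ref{transposition}. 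You instead keep the original coordinates and make a quantitative asymptotic argument: the roots of $F_A(z)+(-1)^ny=0$ cluster near the $n$-th roots $\zeta^k w(\theta)$ of $(-1)^{n+1}y$, and a uniform Rouch\'e estimate on $n$ disjoint moving disks (comparing $F_A(z)-w^n$ with $z^n-w^n$, where the error is $O(|w|^{n-1})$ against a main term of order $|w|^n$) pins exactly one root per disk, so continuity forces the tracked root to stay in its disk and land in the next one after $\theta$ advances by $2\pi$, since $w(2\pi)=\zeta w(0)$. Your translation of this root-level $n$-cycle into the permutation action on the fiber $f^{-1}(y)$ (coordinates of a lift are continuous roots, hence each coordinate ends at $\sigma$ of its starting value) matches the paper's framework and is fine. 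The trade-off: the paper's normal-form computation is estimate-free and simultaneously exhibits an explicit parametrization of $U$ near infinity, while your version is more elementary and self-contained, at the cost of carrying out the uniform Rouch\'e bound and the connectedness/disjoint-disks bookkeeping explicitly; to be fully rigorous you should spell out that the analytically continued root cannot jump between the disjoint disks, which is the one point you currently assert rather than argue.
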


  We choose the loops such that $\ell_1\cdot \ell_2\cdots \ell_{n-1}=\ell_n$ in $\pi_1(\Omega,y)$ as in Figure~\ref{fig}.
  Let $\tau_j\in S_n$ be the permutation of the $n$ roots $R_y$ induced by the monodromy along $\ell_j$. By Claims~\ref{transposition} and \ref{cycle}, each $\tau_j$ ($1\le j\le n-1$) is a transposition and $\tau_n$ is an $n$-cycle, and the relation above implies $\tau_1\cdots \tau_{n-1}=\tau_n$.

  Now form a graph $\Gamma$ on the vertex set $R_y$ by putting an edge between $a$ and $b$ whenever $\tau_j=(ab)$ for some $1\le j\le n-1$. Each transposition $\tau_j$ preserves every connected component of $\Gamma$ setwise, hence so does the subgroup $G=\langle\tau_1,\dots,\tau_{n-1}\rangle$. In particular, $\tau_n\in G$ preserves each connected component. If $\Gamma$ were disconnected, then $\tau_n$ would preserve a proper nonempty subset of $R_y$, contradicting that $\tau_n$ is an $n$-cycle. Therefore $\Gamma$ is connected, so by Lemma~3.10.1 in \cite{GodsilRoyle2001AGT} the transpositions $\tau_1,\dots,\tau_{n-1}$ generate the whole symmetric group $S_n$. Hence, the monodromy action of $\pi_1(\Omega,y)$ on $f^{-1}(y)$ is transitive as wanted.

  \begin{figure}[ht]
    \centering
    \includegraphics{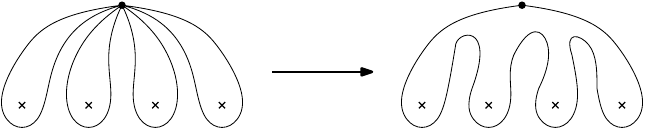}
    \caption{$\ell_1 \cdot \ell_2 \cdots \ell_{n-1} = \ell_n$ in the fundamental group.}
    \label{fig}
  \end{figure}

  Next, we give a proof for Claim~\ref{transposition}. By our hypothesis, $F_A$ has $n-1$ distinct critical points $p_1,\dots,p_{n-1}$. Crucially, every $p_j$ must be a simple critical point, that is, $F_A''(p_j) \neq 0$. We consider the multi-variable complex function $\alpha(z,y) := F_A(z) + (-1)^n y$. We can compute
  \begin{equation*}
    \frac{\partial \alpha}{\partial z}(p_j,v_j) = F'_A(p_j) = 0 \quad\text{and}\quad \frac{\partial^2 \alpha}{\partial z^2}(p_j,v_j) = F''_A(p_j) \neq 0.
  \end{equation*}
  By the Weierstrass preparation theorem, near $(p_j,v_j)$ we can write
  \[
    \alpha(z,y)=\Bigl((z-p_j)^2+\beta_1(y)(z-p_j)+\beta_2(y)\Bigr)\cdot \gamma(z,y),
  \]
  where $\beta_1,\beta_2,\gamma$ are holomorphic, $\beta_1(v_j)=\beta_2(v_j)=0$, and $\gamma(p_j,v_j)\neq 0$.
  Completing the square, set
  \[
    w=(z-p_j)+\frac{\beta_1(y)}{2},
    \qquad
    \beta(y)=\frac{\beta_1(y)^2}{4}-\beta_2(y).
  \]
  Then locally $\alpha(z,y)=\bigl(w^2-\beta(y)\bigr)\cdot \gamma(z,y)$ and we can compute
  \[
    \beta(v_j)=0,
    \qquad \beta'(v_j)=-\frac{(-1)^n}{\gamma(p_j,v_j)}\neq 0.
  \]
  Thus, $\beta$ has a simple zero at $v_j$.
  Fix a basepoint $y_j\in\Omega$ sufficiently close to $v_j$ and let $c_j$ be a small loop around $v_j$ based at $y_j$.
  Near $y_j$, we may choose a holomorphic branch $\delta$ with $\delta^2=\beta$ (see e.g. Lemma~8.7 in \cite{forster1981lectures}). The two local solutions of $\alpha(z,y)=0$ near $p_j$ are given by
  \[
    w_{\pm}(y)=\pm \delta(y),
    \quad\text{equivalently}\quad
    z_\pm(y)=p_j-\frac{\beta_1(y)}{2}\pm \delta(y).
  \]
  Analytic continuation along $c_j$ changes the sign of $\delta$, hence transposes $z_+$ and $z_-$.
  Finally, if $y$ is our global basepoint, choose a path $\gamma_j$ in $\Omega$ from $y$ to $y_j$ and set $\ell_j=\gamma_j c_j \gamma_j^{-1}\in\pi_1(\Omega,y)$.
  The monodromy action of $\ell_j$ is conjugate to that of $c_j$, hence it is again a transposition.

  Finally, we give a proof for Claim~\ref{cycle}. To this end, we consider change of coordinates $s = 1/z$ and $h = 1/y$. Let $\epsilon(s) = (-1)^{n-1}  (1/s^n) (1/F_A(1/s))$. We can remove the singularity of $\epsilon$ at $s=0$ by defining $\epsilon(0) = (-1)^{n-1}$. There is a holomorphic function $\zeta$ defined near $s=0$ with $\zeta^n = \epsilon$. We write $\eta(s) = s \cdot \zeta(s)$ and compute $\eta'(0) \neq 0$, which means $\eta$ is invertible at $s=0$. Observe that $|y| \to \infty$ implies $|z| \to \infty$ under the condition $F_A(z) + (-1)^n y = 0$. Hence, near $h=0$, we have
  \begin{equation*}
    F_A(z) + (-1)^n y = 0 \iff s^n \cdot \epsilon(s) = h \iff (\eta(s))^n = h.
  \end{equation*} For a point $h_\ast$ near but not equal to $h=0$, there is a holomorphic function $\theta$ defined near $h_\ast$ such that $\theta^n = h$, then $s_j(h) = \eta^{-1}(\iota^j \theta(h))$ is a function satisfying $ (\eta(s_j(h)))^n = h$ for $j \in [n]$. Here, $\iota = \exp(2\pi i/n)$ is the $n$-th root of unity. Hence, $1/s_1,\dots,1/s_n$ are the coordinate functions in the parametrization of $U$ near $1/h_\ast$. It is a standard argument that the analytic continuation along a small loop around $h=0$ takes $s_j$ to $s_{j+1}$ and $s_n$ to $s_1$. A small loop around $h=0$ is a large loop around $y=0$, which is homotopic to $\ell_n$.
\end{proof}

\medskip
\noindent {\bf Acknowledgement.} Theorem~\ref{main} was communicated to Ji Zeng by Gyula Károlyi as a conjecture at the 18-th Emléktábla workshop in 2025. We wish to thank Nóra Frankl, Gyula Károlyi, Gergely Kiss, and Gábor Somlai for discussions on this problem. We also wish to thank Zoltán Lóránt Nagy for organizing the workshop. We are grateful to Cosmin Pohoata for informing us of~\cite{pawlowski2024}. We also thank the referee for helpful comments, which improved the exposition and corrected several inaccuracies.

\bibliographystyle{abbrv}
{\footnotesize\bibliography{main}}

\end{document}